\newtheorem{theorem}{Theorem}[section]
\newtheorem{lemma}[theorem]{Lemma}
\newtheorem{proposition}[theorem]{Proposition}
\newtheorem{corollary}[theorem]{Corollary}
\theoremstyle{definition}
\newenvironment{rem}[1][Remark.]{\begin{trivlist}
\item[\hskip \labelsep {\bfseries #1}]}{\end{trivlist}}
\newenvironment{notation}[1][Notation.]{\begin{trivlist}
\item[\hskip \labelsep {\bfseries #1}]}{\end{trivlist}}
\newcommand{\Lie}{\mathfrak}
\title{Local coordinates  for $\operatorname{SL}(n,\mathbf C)$ character varieties of finite volume hyperbolic 3-manifolds.}
\author{Pere Menal-Ferrer \and Joan Porti 
\thanks{Both authors partially supported by  
FEDER/Micinn through grant MTM2009-0759 and by 
 AGAUR through grant SGR2009-1207. 
The second author received the prize “ICREA Acad\`emia” 
for excellence in research, funded by the Generalitat de Catalunya.}
}
\begin{document}
\maketitle

\begin{abstract}
Given a finite volume hyperbolic 3-manifold, we compose a lift of the holonomy in $\operatorname{SL}(2,\mathbf C)$
with the $n$-dimensional irreducible representation of $\operatorname{SL}(2,\mathbf C)$ in $\operatorname{SL}(n,\mathbf C)$. In this paper we
give local coordinates of the $\operatorname{SL}(n,\mathbf C)$-character variety around this representation.
As a corollary, this representation is isolated among all representations that are unipotent at the cusps.
\end{abstract}


\section{Introduction}

Let $M^3$ be an orientable hyperbolic 3-manifold of finite volume with $l> 0$ ends, that are  cusps.
This manifold is homeomorphic to the interior of a compact manifold $\overline M^3$ with
boundary a union of $l$ tori.
Let 
$\widetilde{\operatorname{Hol}}\colon \pi_1(M^3) \rightarrow \operatorname{SL}(2,\mathbf{C})$ 
be a lift of the holonomy of $M^3$ and compose it with the irreducible $n$-dimensional representation
$$
{\varsigma_n}:\operatorname{SL}(2,\mathbf C)\to \operatorname{SL}(n,\mathbf C).
$$
 The composition is denoted by 
\[
	\rho_n= {\varsigma_n}\circ \widetilde{\operatorname{Hol}} \colon \pi_1(M^3) \rightarrow \operatorname{SL}(n, \mathbf{C}).
\]
The variety of characters
\(
      X(M^3,\operatorname{SL}(n,\mathbf C))
\) 
is the algebraic quotient of the variety of representations
$\hom(\pi_1(M^3),\operatorname{SL}(n,\mathbf C))$ by the action of $\operatorname{SL}(n,\mathbf C)$ by conjugation \cite{LubotzkiMagid}.
The character of $\rho_n$ will be denoted by $\chi_n$. 
In \cite{MenalPorti} it is proved that  $\chi_n$ is a smooth point of $X(M^3,\operatorname{SL}(n,\mathbf C))$, with local dimension $(n-1)l$, where $l$ is the number of ends of $M^3$.
The goal of this paper is to find coordinates for a neighborhood of $\chi_n$.

For $i=1,\ldots,n-1$, let
\[
\sigma_i\colon  \operatorname{SL}(n,\mathbf C)\to \mathbf C
\]
denote the $i$-th elementary symmetric polynomial on the eigenvalues, so that the characteristic polynomial of $A\in  \operatorname{SL}(n,\mathbf C)$ is
\[
 P_A(\lambda)=\lambda^n-\sigma_1(A)\lambda ^{n-1}+\cdots + (-1)^{n-1}\sigma_{n-1}(A) \lambda+(-1)^n.
\]
So $\sigma_1(A)$ is the trace of $A$, $\sigma_2(A)$ is obtained from $2\times 2$ principal minors of $A$, and so on. 
The $\sigma_i\colon  \operatorname{SL}(n,\mathbf C)\to \mathbf C$ are polynomial functions, invariant by conjugation. Thus, for any $\gamma\in\pi_1(M^3)$, the map 
\[
 \begin{array}{rcl}
  \hom(\pi_1(M^3), SL(n,\mathbf C))  &   \to   & \mathbf C \\
       \rho     & \mapsto & \sigma_i(\rho(\gamma))
 \end{array}
\]
induces a polynomial map on the character variety
\[
\sigma_i^{\gamma} : X(M^3,\operatorname{SL}(n,\mathbf C))\to\mathbf C.
\]
The main result of this paper is  the following theorem.

\begin{theorem}
 \label{thm:main}
Let $M^3$ be a finite volume, orientable, hyperbolic 3-manifold with  $l> 0$ ends.
Let $\gamma_1,\ldots,\gamma_l \in\pi_1(M^3)$ be nontrivial peripheral elements, one for each end of $M^3$ (or each boundary component of  $\overline M^3$).
Then 
\[
(\sigma_1^{ \gamma_1}, \ldots,\sigma_{n-1}^{\gamma_1},\ldots,\sigma_1^{ \gamma_l}, \ldots,\sigma_{n-1}^{\gamma_l}) \colon X(M^3,\operatorname{SL}(n,\mathbf C))\to\mathbf C^{l(n-1)}
\]
is a local biholomorphism at $\chi_n$.
\end{theorem}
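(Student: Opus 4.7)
The plan is to reduce the theorem to injectivity of the differential at $\chi_n$. By \cite{MenalPorti}, $\chi_n$ is a smooth point of $X(M^3,\operatorname{SL}(n,\mathbf C))$ of local dimension $l(n-1)$, equal to the dimension of the target $\mathbf C^{l(n-1)}$, so it suffices to show that $d\Sigma_{\chi_n}$ is injective, where $\Sigma=(\sigma_1^{\gamma_1},\ldots,\sigma_{n-1}^{\gamma_l})$. I would use the standard identification $T_{\chi_n}X(M^3,\operatorname{SL}(n,\mathbf C))\cong H^1(\pi_1(M^3);\Lie{sl}(n,\mathbf C))$, with coefficients twisted by $\operatorname{Ad}\circ\rho_n$, under which a tangent class is represented by a $1$-cocycle $u$ giving a first-order deformation $\rho_t(\gamma)=(\operatorname{Id}+tu(\gamma))\rho_n(\gamma)+O(t^2)$.

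The first step is a pointwise calculation. Expanding characteristic polynomials shows that $d\sigma_i^{\gamma_j}([u])$ depends only on $u(\gamma_j)$, and the simultaneous vanishing of $d\sigma_i^{\gamma_j}([u])$ for $i=1,\dots,n-1$ is equivalent to $u(\gamma_j)\in\operatorname{Im}(\operatorname{Id}-\operatorname{Ad}(\rho_n(\gamma_j)))$. The key input is that $\rho_n(\gamma_j)=\pm\varsigma_n(\widetilde{\operatorname{Hol}}(\gamma_j))$ is (a sign times) a principal unipotent, hence regular, so its conjugacy class is locally cut out by its characteristic polynomial and the tangent space to this class, translated into $\Lie{sl}(n,\mathbf C)$ via left multiplication, equals $\operatorname{Im}(\operatorname{Id}-\operatorname{Ad}(\rho_n(\gamma_j)))$. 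Hence $[u]\in\ker d\Sigma_{\chi_n}$ means precisely that the restriction of $u$ to each cyclic subgroup $\langle\gamma_j\rangle$ is a coboundary.

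Second, I would factor $d\Sigma$ through the restriction to the peripheral tori,
\[
H^1(\pi_1(M^3);\Lie{sl}(n,\mathbf C))\xrightarrow{\,\mathrm{res}\,}\bigoplus_{j=1}^l H^1(\pi_1(T_j^2);\Lie{sl}(n,\mathbf C))\longrightarrow \mathbf C^{l(n-1)}.
\]
Since commuting parabolics in $\operatorname{SL}(2,\mathbf C)$ lie in a common one-parameter unipotent subgroup, the adjoint action of $\pi_1(T_j^2)$ on $\Lie{sl}(n,\mathbf C)$ is unipotent; the principal $\Lie{sl}_2$-decomposition of $\Lie{sl}(n,\mathbf C)$ then yields $\dim H^0(\pi_1(T_j^2);\Lie{sl}(n,\mathbf C))=n-1$, and Euler characteristic together with Poincaré duality on the torus gives $\dim H^1(\pi_1(T_j^2);\Lie{sl}(n,\mathbf C))=2(n-1)$. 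Combining the long exact sequence of the pair $(\overline M^3,\partial\overline M^3)$, Poincaré--Lefschetz duality, and the irreducibility vanishing $H^0(\pi_1(M^3);\Lie{sl}(n,\mathbf C))=0$, one obtains that $\mathrm{res}$ is injective and that its image is Lagrangian in $\bigoplus_j H^1(\pi_1(T_j^2);\Lie{sl}(n,\mathbf C))$ with respect to the cup-product symplectic form (the half-lives-half-dies principle).

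The main obstacle is the concluding transversality: inside the $2l(n-1)$-dimensional space $\bigoplus_j H^1(\pi_1(T_j^2);\Lie{sl}(n,\mathbf C))$, the Lagrangian $\mathrm{im}(\mathrm{res})$ of dimension $l(n-1)$ must meet trivially the kernel $K_\sigma$ of peripheral $\sigma$-evaluation described in Step~1. My preferred attack is to verify by explicit computation on the principal $\Lie{sl}_2$-isotypic pieces of $\Lie{sl}(n,\mathbf C)$ that $K_\sigma$ is itself a Lagrangian of dimension $l(n-1)$, reducing the question to transversality of two Lagrangians in a symplectic vector space, and then to exhibit at each cusp $n-1$ explicit first-order deformations—arising from variations of the eigenvalues of $\rho_n(\gamma_j)$ along the cusp—whose classes lie in $\mathrm{im}(\mathrm{res})$ and map to a basis of $\mathbf C^{n-1}$ under $\sigma$-evaluation. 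These deformations should be responsible for the smoothness of $X(M^3,\operatorname{SL}(n,\mathbf C))$ at $\chi_n$ and are expected to be explicitly describable through the asymptotic geometry of the cusps used in \cite{MenalPorti}.
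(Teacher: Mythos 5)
Your reduction to injectivity of $d\Sigma_{\chi_n}$, the identification (via regularity of the principal unipotent) of $\ker d\Sigma$ with the classes whose peripheral restrictions land in the kernel $K_\sigma$ of the $\sigma$-evaluation, and the half-lives-half-dies bookkeeping are all sound and broadly parallel to the paper's setup. The genuine gap is the concluding step. Knowing that $\operatorname{im}(\mathrm{res})$ and $K_\sigma$ are both Lagrangian in $\bigoplus_j H^1(\pi_1(T_j^2);\mathfrak{sl}(n,\mathbf C))$ gives you nothing: two Lagrangians of a symplectic vector space may intersect in anything from $0$ up to coinciding entirely, so ``transversality of two Lagrangians'' is not a reduction but a restatement of what must be proved. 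Your fallback --- exhibiting $n-1$ explicit deformations per cusp lying in $\operatorname{im}(\mathrm{res})$ and mapping onto a basis of $\mathbf C^{n-1}$ --- is literally equivalent to the surjectivity of $d\Sigma$, i.e.\ to the theorem itself, and no such construction is given; ``variations of the eigenvalues of $\rho_n(\gamma_j)$'' cannot be written down a priori, because one does not yet know which peripheral deformations extend over $M^3$.

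The missing idea is the second half of the $L^2$-rigidity statement (Raghunathan/Matsushima--Murakami, as packaged in \cite{MenalPorti} and quoted as Theorem~\ref{thm:l2rigidity}): no nonzero class in $H^1(M^3;E_{\operatorname{Ad}\circ\rho_n})$ admits an $L^2$ representative. One checks that $K_\sigma$ is exactly the span of the classes $dx\otimes\mathfrak{h}_+^j$, whose representatives decay like $e^{-jt}$ down the cusp and hence are $L^2$ (Lemma~\ref{Lemma:forml2}); therefore $\operatorname{im}(\mathrm{res})\cap K_\sigma=0$, which is precisely the transversality you need (Proposition~\ref{prop:generatorimage}). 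It then remains to verify that the $\sigma$-evaluation is nondegenerate on a complement of $K_\sigma$, which the paper does by an explicit computation with the forms $\omega_i$ built from $\mathfrak{h}_-^i$ and the polynomials $Q_i^\gamma$. Without the $L^2$ input your argument cannot close: duality and dimension counts alone do not determine which Lagrangian arises as the image of the restriction map, and that is exactly the information on which the theorem turns.
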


\begin{corollary}
The character $\chi_n$ is isolated among all characters of representations in $ \operatorname{SL}(n,\mathbf C)$ that are unipotent at the peripheral subgroups.
\end{corollary}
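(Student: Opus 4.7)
The plan is to read off the corollary directly from Theorem~\ref{thm:main}, after observing that the condition ``unipotent at the peripheral subgroups'' forces every coordinate $\sigma_j^{\gamma_i}$ to take the same value as it does at $\chi_n$.

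First, I would check that $\chi_n$ itself belongs to the locus of characters unipotent at peripheral subgroups. The lift $\widetilde{\operatorname{Hol}}$ can be chosen so that each $\widetilde{\operatorname{Hol}}(\gamma_i)$ is a (genuine) unipotent element of $\operatorname{SL}(2,\mathbf C)$, rather than $-I$ times a unipotent; this is standard in the cusped setting and is implicit in the paper's setup. Then $\rho_n(\gamma_i)=\varsigma_n(\widetilde{\operatorname{Hol}}(\gamma_i))$ is a regular unipotent element of $\operatorname{SL}(n,\mathbf C)$, all of whose eigenvalues equal $1$. In particular,
\[
\sigma_j^{\gamma_i}(\chi_n)=\binom{n}{j},\qquad 1\le i\le l,\ 1\le j\le n-1.
\]

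Second, I would observe that exactly the same computation applies to any representation $\rho\colon\pi_1(M^3)\to\operatorname{SL}(n,\mathbf C)$ that is unipotent on each peripheral subgroup: for such $\rho$, each $\rho(\gamma_i)$ is unipotent, so all its eigenvalues equal $1$, and hence $\sigma_j^{\gamma_i}(\chi(\rho))=\binom{n}{j}$ for every $i,j$.

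Finally, I would invoke Theorem~\ref{thm:main}: it gives a neighborhood $U$ of $\chi_n$ in $X(M^3,\operatorname{SL}(n,\mathbf C))$ on which the joint map
\[
\Phi=(\sigma_1^{\gamma_1},\ldots,\sigma_{n-1}^{\gamma_1},\ldots,\sigma_1^{\gamma_l},\ldots,\sigma_{n-1}^{\gamma_l})
\]
is a biholomorphism onto an open set of $\mathbf C^{l(n-1)}$, hence injective on $U$. Any $\chi\in U$ with $\Phi(\chi)=\Phi(\chi_n)$ must therefore coincide with $\chi_n$. Combining this with the previous step, the only character in $U$ that comes from a representation unipotent at the peripheral subgroups is $\chi_n$ itself, which is the claimed isolation. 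There is no serious obstacle: the corollary is essentially a direct translation of Theorem~\ref{thm:main} once one notices that ``unipotent at peripherals'' pins down the peripheral elementary symmetric functions to the constants $\binom{n}{j}$.
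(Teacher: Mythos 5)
Your argument is correct and is exactly the (omitted) proof the paper intends: unipotency at the cusps pins each coordinate $\sigma_j^{\gamma_i}$ to the constant $\binom{n}{j}$, and local injectivity of the coordinate map from Theorem~\ref{thm:main} then forces isolation. One caveat: your side claim that the lift $\widetilde{\operatorname{Hol}}$ can always be chosen with every peripheral element mapping to a genuine (trace $+2$) parabolic is false in general --- for a knot complement in a homology sphere the longitude has trace $-2$ for every lift, which is why the paper carries the $\pm$ signs and Remark~\ref{rem:only+} around; this only matters for $n$ even (since $\varsigma_n(-A)=\varsigma_n(A)$ for $n$ odd), and it does not affect the isolation argument itself, since the unipotent locus is still contained in a single fiber of the local biholomorphism.
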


When $n=2$, we obtain the following result of Kapovich \cite{Kapovich} (see also Bromberg \cite{Bromberg}).

\begin{corollary} The deformation space $X(M^3,\operatorname{SL}(2,\mathbf C))$ is locally parameterized by the trace of $\gamma_1,\ldots,\gamma_l$ around a 
lift of the holonomy representation.
\end{corollary}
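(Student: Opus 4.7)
The strategy is simply to apply Theorem \ref{thm:main} with $n = 2$ and translate the conclusion into the classical trace-coordinate language. The point is that for $n=2$ all of the data in the main theorem degenerates to exactly the setting of Kapovich and Bromberg.

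First I would record that the two-dimensional irreducible representation ${\varsigma_2}\colon\operatorname{SL}(2,\mathbf{C})\to\operatorname{SL}(2,\mathbf{C})$ is (conjugate to) the identity, so that $\rho_2 = \widetilde{\operatorname{Hol}}$ and $\chi_2$ is literally the character of the chosen lift of the holonomy. Next, for $A\in\operatorname{SL}(2,\mathbf{C})$ and $1\le i\le n-1 = 1$, the elementary symmetric polynomials $\sigma_i$ collapse to the single function $\sigma_1=\operatorname{tr}$, so that for each peripheral element $\gamma_j$ the function $\sigma_1^{\gamma_j}$ is exactly the trace function $\chi\mapsto\chi(\gamma_j)$ on the character variety.

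Putting these two observations together, the map appearing in Theorem \ref{thm:main} is
\[
(\sigma_1^{\gamma_1},\ldots,\sigma_1^{\gamma_l})=(\operatorname{tr}^{\gamma_1},\ldots,\operatorname{tr}^{\gamma_l})\colon X(M^3,\operatorname{SL}(2,\mathbf{C}))\to\mathbf{C}^l,
\]
and the theorem asserts that it is a local biholomorphism at $\chi_2$. This is precisely the assertion of the corollary, and it reproduces the results of \cite{Kapovich} and \cite{Bromberg}. There is no genuine obstacle to surmount: all the work is contained in Theorem \ref{thm:main}, and the corollary is obtained by unwinding definitions in the special case $n=2$.
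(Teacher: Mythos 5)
Your proposal is correct and matches the paper, which states this corollary without proof precisely because it is the immediate specialization of Theorem~\ref{thm:main} to $n=2$: there $\varsigma_2$ is the identity, $\rho_2=\widetilde{\operatorname{Hol}}$, and the single invariant $\sigma_1$ is the trace, so the map $(\sigma_1^{\gamma_1},\ldots,\sigma_1^{\gamma_l})$ is exactly the trace coordinates. Your unwinding of the definitions is exactly what is intended.
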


The proof relies on a vanishing theorem of \cite{RagAJM,MatMur}, that asserts that infinitesimal $L^2$ deformations are trivial. In this way we
determine explicit differential forms on the cusp that describe the infinitesimal deformations and prove Theorem~\ref{thm:main}.

The paper is organized as follows. In Section~\ref{sec:rel} we describe the basic facts of
 the n-dimensional irreducible representation 
${\varsigma_n}:\operatorname{SL}(2,\mathbf C)\to \operatorname{SL}(n,\mathbf C)$,
that will be required later. Section~\ref{sec:cohomology}
is devoted to compute the explicit differential forms that give the infinitesimal deformations. Finally, in Section~\ref{section:derivate}
we compute the derivative of the $\sigma_i^{\gamma_j}$ with respect to these infinitesimal deformations.

\section{The $n$-dimensional representation}
\label{sec:rel}

The irreducible $n$-dimensional complex representation 
$$
{\varsigma_n}\colon \operatorname{SL}(2,\mathbf C) \to \operatorname{SL}(n,\mathbf C)
$$
is the $(n-1)$-symmetric power $\operatorname{Sym}^{n-1}(\mathbf C^2)\cong\mathbf C^{n}$.
The induced representation of Lie algebras is also denoted by
 ${\varsigma_n}: \Lie{sl}(2,\mathbf C) \to \Lie{sl}(n,\mathbf C)$.
We shall work with the  basis for $\Lie{sl}(2,\mathbf C)$  given by
\[
\Lie{e} = \begin{pmatrix}
    1 & 0 \\
    0 & -1
   \end{pmatrix}, \quad
\Lie{f} = \begin{pmatrix}
    0 & 1 \\
    0 & 0
   \end{pmatrix}, \quad
\Lie{g} = \begin{pmatrix}
    0 & 0 \\
    1 & 0
   \end{pmatrix}.
\]
A straightforward computation shows that:
\begin{equation} \label{eqn:h+}
{\mathfrak {h}}_+ := {\varsigma_n}( \Lie{f} ) =  \begin{pmatrix}
                                           0    &    1   &  0  &      &    &  \\
					   0    &    0   &  2  &   &    &  \\
					   0    &    0   &  0  &         &    &   \\
					       &  &     & \ddots  &    &   \\
					       &       &    &         & 0  &  n-1 \\
					       &       &    &   & 0  &  0 
                                          \end{pmatrix},
\end{equation}
namely the $(i,j)$-entry of $\mathfrak {h}_+ $ is $i$ when $j=i+1$ and $0$ otherwise.
Similarly
\begin{equation} \label{eqn:h-}
{\mathfrak {h}}_- := {\varsigma_n}( \Lie{g} ) = \begin{pmatrix}
                                           0    &    0   &   0 &         &    &    \\
					   n-1  &    0   &   0 &         &    &    \\
					   0    &  n-2   &   0 &         &    &    \\
					        &        &     & \ddots  &    &    \\
					        &        &     &         & 0  &  0 \\
					        &        &     &         & 1  &  0 
                                          \end{pmatrix}.
\end{equation}
This allows to describe $\varsigma_n $ for $\pm \exp(\beta \Lie f)$ and $\pm \exp(\beta \Lie g)$, $\beta\in\mathbf C$:
\begin{eqnarray} \label{eqn:sym(m)}
{\varsigma_n }(
 \pm\left(\begin{smallmatrix}
    1 & \beta \\
    0 & 1
   \end{smallmatrix}\right)
)
& = &(\pm 1)^{n-1} \varsigma_n( e^{\beta  \Lie{f}} ) =  (\pm 1)^{n-1} e^{ \beta \Lie{h}_+ }, \\
{\varsigma_n }(
 \pm\left(\begin{smallmatrix}
    1 & 0 \\
    \beta & 1
   \end{smallmatrix}\right)
)
& = &(\pm 1)^{n-1} \varsigma_n( e^{\beta  \Lie{g}} ) =  (\pm 1)^{n-1} e^{ \beta \Lie{h}_-}. 
\end{eqnarray}
 Notice that both matrices are triangular   with $1$ in the diagonal, in particular unipotent.

\begin{notation}
	The group $\operatorname{SL}(2,\mathbf C)$ acts on the Lie algebra 
	$\mathfrak{sl}(n,\mathbf C)$ by composing the adjoint representation with ${\varsigma_n}$. For 
	$A\in \operatorname{SL}(2,\mathbf C)$ and $\mathfrak{a}\in \mathfrak{sl}(n,\mathbf C)$, 
	this action will be simply denoted by $A\, \mathfrak{a}$. Namely,
	\[
	A\, \mathfrak{a}= \operatorname{Ad}_{{\varsigma_n}(A)}(\mathfrak{a})= {\varsigma_n}(A)\,\mathfrak{a}\,{\varsigma_n}(A^{-1} ).
	\]
\end{notation}

\begin{lemma} \label{lemma:invariants}
	For $\beta\neq 0$, the subspace of matrices in $\mathfrak{sl}(n,\mathbf C)$ that are invariant by 
	$\pm\left(\begin{smallmatrix}
		1 & \beta \\
		0 & 1
	   \end{smallmatrix}
	   \right)$ is 
	   \[
	   \langle  {\mathfrak {h}}_+,  {\mathfrak {h}}_+^2,\ldots,  {\mathfrak {h}}_+^{n-1}\rangle.
	   \]
\end{lemma}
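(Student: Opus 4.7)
The plan is to reduce invariance of $\mathfrak{a}$ under the group element to commutation with $\mathfrak{h}_+$, and then identify the centralizer of $\mathfrak{h}_+$ explicitly using the fact that it is a regular nilpotent element.

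First, since the scalar $(\pm 1)^{n-1}$ is central and acts trivially by conjugation, (\ref{eqn:sym(m)}) gives that invariance under $\pm\left(\begin{smallmatrix} 1 & \beta \\ 0 & 1 \end{smallmatrix}\right)$ is the same as invariance under the conjugation action of $e^{\beta \mathfrak{h}_+}$, i.e.\
\[
e^{\beta\,\operatorname{ad}_{\mathfrak{h}_+}}(\mathfrak{a})=\mathfrak{a}.
\]

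Next, I would argue that this is equivalent to $[\mathfrak{h}_+,\mathfrak{a}]=0$. Since $\mathfrak{h}_+$ is strictly upper triangular, it is nilpotent, and therefore so is $\operatorname{ad}_{\mathfrak{h}_+}$ on $\mathfrak{sl}(n,\mathbf{C})$. Factoring formally,
\[
e^{\beta\operatorname{ad}_{\mathfrak{h}_+}}-\operatorname{Id}=\beta\operatorname{ad}_{\mathfrak{h}_+}\circ\Bigl(\operatorname{Id}+\tfrac{\beta}{2!}\operatorname{ad}_{\mathfrak{h}_+}+\tfrac{\beta^2}{3!}\operatorname{ad}_{\mathfrak{h}_+}^2+\cdots\Bigr),
\]
and the second factor is unipotent, hence invertible. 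For $\beta\neq 0$ the equation $(e^{\beta\operatorname{ad}_{\mathfrak{h}_+}}-\operatorname{Id})(\mathfrak{a})=0$ is therefore equivalent to $\operatorname{ad}_{\mathfrak{h}_+}(\mathfrak{a})=0$. So the invariant subspace is exactly the centralizer $Z(\mathfrak{h}_+)\cap\mathfrak{sl}(n,\mathbf{C})$.

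Finally, I would compute this centralizer. Looking at the explicit form (\ref{eqn:h+}), one sees that $\mathfrak{h}_+$ sends the standard basis by $\mathfrak{h}_+ e_j=(j-1)e_{j-1}$, so its kernel is one-dimensional (spanned by $e_1$) and $\mathfrak{h}_+^{n-1}\neq 0$ while $\mathfrak{h}_+^n=0$. Hence $\mathfrak{h}_+$ is a single nilpotent Jordan block, i.e.\ a regular nilpotent element of $\mathfrak{gl}(n,\mathbf{C})$, whose centralizer is the $n$-dimensional algebra $\mathbf{C}[\mathfrak{h}_+]=\langle \operatorname{Id},\mathfrak{h}_+,\ldots,\mathfrak{h}_+^{n-1}\rangle$. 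Intersecting with $\mathfrak{sl}(n,\mathbf{C})$ eliminates only the identity (since the powers $\mathfrak{h}_+^k$ with $k\ge 1$ are strictly upper triangular, hence traceless, while $\operatorname{tr}\operatorname{Id}=n$), leaving exactly $\langle\mathfrak{h}_+,\mathfrak{h}_+^2,\ldots,\mathfrak{h}_+^{n-1}\rangle$.

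The only step that requires any care is the passage from $e^{\beta\operatorname{ad}_{\mathfrak{h}_+}}(\mathfrak{a})=\mathfrak{a}$ to $[\mathfrak{h}_+,\mathfrak{a}]=0$; everything else is a direct inspection of the matrix $\mathfrak{h}_+$ together with the standard fact that the centralizer of a regular nilpotent is the polynomial algebra it generates.
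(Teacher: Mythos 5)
Your proof is correct, and it takes a genuinely different route from the paper's in the decisive step. Both arguments first reduce invariance under $\pm\left(\begin{smallmatrix} 1 & \beta \\ 0 & 1 \end{smallmatrix}\right)$ to the commutation condition $[\mathfrak{h}_+,\mathfrak{a}]=0$, and both note that the powers $\mathfrak{h}_+,\ldots,\mathfrak{h}_+^{n-1}$ obviously lie in this centralizer; the difference is in how the reverse inclusion is established. The paper computes $\dim\operatorname{Ker}(\operatorname{ad}_{\mathfrak{h}_+})=n-1$ by representation theory: it decomposes $\mathfrak{sl}(n,\mathbf{C})$ into $n-1$ irreducible $\operatorname{SL}(2,\mathbf{C})$-summands via Clebsch--Gordan and observes that $\varsigma_n(\Lie{f})$ has a one-dimensional kernel on each summand. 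You instead observe that $\mathfrak{h}_+$ is a single Jordan block (regular nilpotent) and invoke the standard linear-algebra fact that its centralizer in $\mathfrak{gl}(n,\mathbf{C})$ is the $n$-dimensional algebra $\mathbf{C}[\mathfrak{h}_+]$, then cut down by the trace condition. Your version is more elementary (no $\mathfrak{sl}_2$-module theory), at the cost of appealing to the regular-nilpotent centralizer fact, which, if the referee insisted, you could verify by the usual direct computation with a Jordan block. A further point in your favor: the lemma asks for invariance under a \emph{single} element $\exp(\beta\Lie{f})$ with $\beta\neq 0$, not under the whole one-parameter group, and the paper silently identifies the two; your factorization $e^{\beta\operatorname{ad}_{\mathfrak{h}_+}}-\operatorname{Id}=\beta\operatorname{ad}_{\mathfrak{h}_+}\circ(\text{unipotent})$, valid because $\operatorname{ad}_{\mathfrak{h}_+}$ is nilpotent, is exactly the justification needed for that identification, so your write-up is actually more careful on this point.
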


\begin{proof}
	Since $\pm\left(\begin{smallmatrix}
		1 & \beta \\
		0 & 1
	   \end{smallmatrix}
	   \right)
	=\exp(\beta\Lie{g})$,
	the lemma is equivalent to saying that $ \langle  {\mathfrak {h}}_+,  {\mathfrak {h}}_+^2,\ldots,  {\mathfrak {h}}_+^{n-1}\rangle $ 
	is the subspace of invariants by the one-parameter group
	generated by $\Lie{f}$, and this latter space is exactly 
	\[
	\operatorname{Ker} \Lie{f} = \left\{ \mathfrak{a} \in \mathfrak{sl}(n,\mathbf C) \mid [\varsigma_n(\Lie{f}), \mathfrak{a}] = 0 \right\}.
	\]
	Since ${\mathfrak {h}}_+ = {\varsigma_n}( \Lie{f} ) $,
	$$
	     \langle  {\mathfrak {h}}_+,  {\mathfrak {h}}_+^2,\ldots,  {\mathfrak {h}}_+^{n-1}\rangle \subseteq  \operatorname{Ker} \Lie{f}.
	$$
	To prove the equality, we show that $\operatorname{Ker} \Lie{f}$ has dimension $n-1$.
	To see this, we decompose $\mathfrak{sl}(n,\mathbf C)$, as  $\operatorname{SL}(2,\mathbf C)$-module,
	into irreducible factors using Clebsh-Gordan:
	\[ 
	\mathfrak{sl}(n,\mathbf C) = \operatorname{Sym}^{2n-1}(\mathbf C^2)\oplus \ldots \oplus \operatorname{Sym}^{5}(\mathbf C^2)
	\oplus \operatorname{Sym}^{3}(\mathbf C^2).
	\] 
	As an endomorphism of $\operatorname{Sym}^{k}(\mathbf C^2)$, the rank of $\Lie{f}$ is $k$ 
	(use for instance Equation \eqref{eqn:h-}), and hence its kernel has dimension $1$. 
	The result then follows immediately.
\end{proof}

We shall also require the following computations. Since $[\Lie{e},\Lie{f}]= 2\Lie{f}$ and $[\Lie{e},\Lie{g}]= -2\Lie{g}$,
\begin{equation}
	\begin{pmatrix} \lambda & 0 \\ 0 & 1/\lambda\end{pmatrix}\,  {\mathfrak {h}}_{\pm}= \lambda ^{\pm 2} \,  {\mathfrak {h}}_{\pm}.
\end{equation}
Hence, for $i=1,\ldots,n-1$,
\begin{equation}
	\label{eqn:lambdah+}
	\begin{pmatrix} \lambda & 0 \\ 0 & 1/\lambda\end{pmatrix}\,  {\mathfrak {h}}_+^i= \lambda ^{2i} \,  {\mathfrak {h}}_+^i 
\qquad \textrm{ and } \qquad 
	\begin{pmatrix} \lambda & 0 \\ 0 & 1/\lambda\end{pmatrix}\,  {\mathfrak {h}}_-^i= \lambda ^{-2i} \,  {\mathfrak {h}}_-^i.
\end{equation}
Finally, we recall the bilinear product
\begin{equation}
\label{eqn:bilinear}
 \begin{array}{ccl}
  \mathfrak{sl}(n,\mathbf C)\times   \mathfrak{sl}(n,\mathbf C) & \to & \mathbf C \\
         ({\mathfrak v}_1,{\mathfrak v}_2)& \mapsto &\operatorname{trace}({\mathfrak v}_1\, {\mathfrak v}_2),
 \end{array}
\end{equation}
which is a multiple of the Killing form.
This pairing  is nondegenerate, symmetric,  bilinear and 
$\operatorname{Ad}\circ{\varsigma_n}$-invariant
(hence $\operatorname{Ad}\circ\rho_n$-invariant). In addition
$\operatorname{trace}(  {\mathfrak {h}}_-^i\,  {\mathfrak {h}}_+^j)=0$ iff $i\neq j$. For $i=1,\ldots,n-1$, we denote $c_i=\operatorname{trace}(  {\mathfrak {h}}_-^i\,  {\mathfrak {h}}_+^i)\neq 0$.
Such a pairing is not unique, as $\mathfrak{sl}(n,\mathbf C)$ is not an irreducible $\operatorname{SL}(2,\mathbf{C})$-module.

\section{Infinitesimal deformations}
\label{sec:cohomology}

To describe explicit infinitesimal deformations, we shall work in cohomology with twisted coefficients. 
The representation $\rho_n$ is semisimple, hence by \cite{Weil,LubotzkiMagid} the Zariski tangent space of $X(M^3,\operatorname{SL}(n,\mathbf C))$ at 
$\chi_n$ is isomorphic to
$H^1(\pi_1(M^3), \mathfrak{sl}(n,\mathbf C)_{\operatorname{Ad}\circ\rho_n})$, where $ \mathfrak{sl}(n,\mathbf C)_{\operatorname{Ad}\circ\rho_n}$ denotes the Lie algebra with the action obtained by 
composing  $\rho_n$ and the adjoint representation 
(this is described in more detail in  Section~\ref{section:derivate}, cf.\ \eqref{eqn:weil}).

Since $M^3$ is aspherical, we shall work with the cohomology of $M^3$ with coefficients in the flat bundle 
\[
E_{\operatorname{Ad}\circ\rho_n}= \widetilde{M^3} \times_{\pi_1M^3} \mathfrak{sl}(n,\mathbf C)_{\operatorname{Ad}\circ\rho_n}.
\]
Let $\Omega^p(M^3,E_{\operatorname{Ad}\circ\rho_n})$ denote the space of smooth p-forms on $M^3$ valued on $E_{\operatorname{Ad}\circ\rho_n}$, 
ie. the space of smooth sections of the bundle $\bigwedge^p T^* M^3\otimes  E_{\operatorname{Ad}\circ\rho_n}$. The de Rham cohomology of 
$\Omega^*(M^3,E_{\operatorname{Ad}\circ\rho_n})$
is denoted by 
$$
H^*(M^3; E_{\operatorname{Ad}\circ\rho_n}),
$$
and it is naturally isomorphic to  the group cohomology
$$
H^*(\pi_1(M^3), \mathfrak{sl}(n,\mathbf C)_{\operatorname{Ad}\circ\rho_n}).
$$
The explicit constructions of these identifications will be given in Section~\ref{section:derivate}.

Next we need to recall the  inner product on $\Omega^p(M^3,E_{\operatorname{Ad}\circ\rho_n})$. In order to do that, 
start with the homogeneous structure of hyperbolic space,
$$
\mathbf H^3\cong \widetilde{M^3}=\operatorname{SL}(2,\mathbf C)/\operatorname{SU}(2),
$$
ie.\ $\operatorname{SU}(2)$ is the stabilizer of a base point $p\in\mathbf H^3$. 
Fix a $\operatorname{SU}(2)$-invariant hermitian
product on $\mathfrak{sl}(n,\mathbf C)$, that we choose to be  the product  $\langle\;,\rangle_p$
at the fiber $ E_{\operatorname{Ad}\circ\rho_n,p}$ of the base point $p$. 
Use the rule
$$
\langle  v_1, v_2\rangle_{\gamma p}= \langle \gamma^{-1}\,v_1,\gamma^{-1}\, v_2\rangle_{p},\qquad \forall \gamma\in \operatorname{SL}(2,\mathbf C), \ v_1,v_2\in E_{\operatorname{Ad}\circ\rho_n,\gamma p},
$$
to define it at the fiber of any point $\gamma p\in\mathbf H^3$.
By using an orthonormal basis, it induces an inner
product on the fibers of $\bigwedge^* T^* M^3\otimes  E_{\operatorname{Ad}\circ\rho_n}$, and on $\Omega^*(M^3; E_{\operatorname{Ad}\circ\rho_n})$
by integration:
 if $\alpha,\beta \in \Omega^*(M^3; E_{\operatorname{Ad}\circ\rho_n})$ then
$$(\alpha,\beta) = \int_M \langle\alpha(x),\beta(x)\rangle_x d\operatorname{vol}.$$
A form $\alpha\in \Omega^*(M^3; E_{\operatorname{Ad}\circ\rho_n})$ is $L^2$ if $\vert\alpha\vert^2=(\alpha,\alpha)<\infty$.

Recall that $M^3$ has $l$ cusps and that it is homeomorphic to the interior of a compact manifold $\overline M^3$ with
boundary a union of $l$ tori. We shall use the following result from \cite{MenalPorti}, based on rigidity results of Raghunathan
\cite{RagAJM} and Mathsushima-Murakami \cite{MatMur}.

\begin{theorem}[\cite{MenalPorti}]
	\label{thm:l2rigidity}
	If $n\geq 2$, then 
	\[
	\dim_\mathbf{C} H^1(M^3; E_{\operatorname{Ad}\circ \rho_n} ) = l(n-1).
	\]
	In addition, all nontrivial elements in $H^1(M^3; E_{\operatorname{Ad}\circ \rho_n} )$
	are nontrivial in $H^1(\partial\overline M^3 ; E_{\operatorname{Ad}\circ \rho_n} )$ and have no $L^2$-representative.
\end{theorem}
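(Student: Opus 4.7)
The plan is to combine three ingredients: (a) a direct computation of $H^*(\partial \overline M^3; E_{\operatorname{Ad}\circ \rho_n})$ from the unipotent nature of the peripheral holonomy together with Lemma~\ref{lemma:invariants}; (b) a half-lives-half-dies argument from Poincaré-Lefschetz duality, using the bilinear pairing \eqref{eqn:bilinear} to identify $E_{\operatorname{Ad}\circ \rho_n}$ with its dual flat bundle; and (c) the $L^2$-vanishing theorem of Raghunathan \cite{RagAJM} and Matsushima-Murakami \cite{MatMur} to control the kernel of restriction to the boundary.

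First I would compute $H^*(\partial \overline M^3; E_{\operatorname{Ad}\circ \rho_n})$. On each boundary torus $T_i$, the peripheral holonomy is parabolic in $\operatorname{SL}(2,\mathbf C)$, so by \eqref{eqn:sym(m)} its image under $\rho_n$ is unipotent of the form $\exp(\beta \mathfrak{h}_+)$. By Lemma~\ref{lemma:invariants}, the space of $\pi_1(T_i)$-invariants in $\mathfrak{sl}(n,\mathbf C)$ has dimension $n-1$, so $\dim H^0(T_i; E_{\operatorname{Ad}\circ\rho_n}) = n-1$. Poincaré duality on $T_i$, combined with the self-duality of the coefficient bundle through \eqref{eqn:bilinear}, gives $\dim H^2(T_i; E_{\operatorname{Ad}\circ\rho_n}) = n-1$ as well, and the vanishing of the Euler characteristic forces $\dim H^1(T_i; E_{\operatorname{Ad}\circ\rho_n}) = 2(n-1)$. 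Summing over ends yields $\dim H^1(\partial \overline M^3; E_{\operatorname{Ad}\circ\rho_n}) = 2l(n-1)$.

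Next, irreducibility of $\rho_n$ gives $H^0(\overline M^3; E_{\operatorname{Ad}\circ\rho_n}) = 0$, and Poincaré-Lefschetz duality on $(\overline M^3, \partial \overline M^3)$ with self-dual coefficients identifies the connecting homomorphism $\delta\colon H^1(\partial ; E) \to H^2(\overline M^3, \partial; E)$ with the transpose of the restriction $r\colon H^1(\overline M^3; E) \to H^1(\partial; E)$ under the skew-symmetric cup-product pairing on $H^1(\partial; E)$. The long exact sequence of the pair together with this duality yields $\operatorname{image}(r) = \ker(\delta) = \operatorname{image}(r)^\perp$, so $\operatorname{image}(r)$ is a Lagrangian subspace of dimension $l(n-1)$, and in particular $\dim H^1(\overline M^3; E_{\operatorname{Ad}\circ\rho_n}) \geq l(n-1)$.

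It remains to prove that $r$ is injective, which will simultaneously deliver the dimension equality and the two additional assertions, about nontriviality on $\partial \overline M^3$ and non-existence of $L^2$ representatives. For this I would appeal to Hodge theory on the finite-volume hyperbolic manifold: after a standard analysis on the cusps, cohomology classes in $\ker r$ are represented by $L^2$-harmonic forms. The Weitzenböck-Bochner formula of Raghunathan and Matsushima-Murakami, applied to the bundle associated with $\operatorname{Ad}\circ \rho_n$, then shows that such harmonic $L^2$ 1-forms must vanish, because the curvature term coming from the negative sectional curvature of $\mathbf H^3$ dominates for the nontrivial irreducible $\operatorname{SL}(2,\mathbf C)$-summands appearing in the Clebsch-Gordan decomposition of $\mathfrak{sl}(n,\mathbf C)$. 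The main obstacle is precisely this analytic input: one has to truncate or compactify the cusps to set up Hodge theory with the correct boundary behaviour, verify the $L^2$-decay of harmonic representatives in the unipotent cusp regions, and invoke the Bochner positivity in the form valid for $\operatorname{Ad}\circ \rho_n$ rather than just for the standard adjoint bundle.
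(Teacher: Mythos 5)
This statement is quoted by the paper from the reference \cite{MenalPorti} and is not proved here at all, so there is no in-paper argument to compare against; what you have written is essentially a reconstruction of the proof in that cited reference. Your three ingredients are the right ones and in the right order: the boundary computation via Lemma~\ref{lemma:invariants} and the Euler characteristic of $T^2$ giving $\dim H^1(\partial\overline M^3;E)=2l(n-1)$, the half-lives-half-dies argument making $\operatorname{image}(r)$ Lagrangian of dimension $l(n-1)$ (note that duality only gives $\operatorname{image}(r)\subseteq\operatorname{image}(r)^\perp$ directly; the reverse inclusion is the dimension count using nondegeneracy of the pairing $H^1(\overline M^3;E)\times H^2(\overline M^3,\partial;E)\to\mathbf C$), and the $L^2$-vanishing to kill $\ker r$. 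The one place where your sketch is genuinely too quick is the chain ``$\ker r$ $\Rightarrow$ $L^2$-harmonic representative $\Rightarrow$ zero'': the correct route is first a cutoff argument (a class restricting to an exact form $d\beta$ on the cusp is cohomologous to the compactly supported form obtained by subtracting $d(\phi\beta)$), and then a Hodge-theoretic statement valid on the noncompact finite-volume manifold asserting that a nonzero class with an $L^2$ representative admits a nonzero $L^2$-harmonic representative, to which the Matsushima--Murakami Weitzenb\"ock positivity applies summand by summand in the Clebsch--Gordan decomposition. That second step is the technical heart of \cite{MenalPorti} and cannot be dismissed as standard compact Hodge theory; once it is in place, your scheme does deliver all three assertions, since the ``no $L^2$-representative'' clause is exactly the contrapositive of the same vanishing statement.
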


\begin{rem} To simplify, from now on we will assume that $l=1$, ie. $M^3$ \emph{has a single cusp}. The proof below applies to any $l\in\mathbf N$, 
because most of the argument is localized at the cusp.
\end{rem}

We need to describe the metric on a cusp $U\subset M^3$, namely $U$ is the quotient of a horoball in $\mathbf H^3$ by a rank
two parabolic group of isometries. Notice that $M\setminus\operatorname{int}(U)$ is compact, thus a form $\Omega^*(M^3,E_{\operatorname{Ad}\circ \rho_n})$
is $L^2$ (has finite $L^2$ norm) if and only if its restriction to $\Omega^*(U,E_{\operatorname{Ad}\circ \rho_n})$ is $L^2$.

The cusp $U$ is diffeomorphic  to $T^2\times [0,\infty)$, and it is isometric to the warped product
$$
d t^2 + e^{-2 t} d s^2_{T^2} ,
$$ 
where $d s^2_{T^2}$ denotes a flat metric on the $2$-torus.
Consider $\vartheta$ any 1-form on the $2$-torus $T^2$, and view it as a form on $U$ by pullback from the projection
to the first factor
$U= T^2\times [0,\infty)\to T^2$.

Assume that the holonomy of the cusp lies in the group
$$
\left\lbrace
\pm
\begin{pmatrix}
 1 & a \\
 0 & 1
\end{pmatrix}
\mid 
a\in\mathbf C
\right\rbrace
.
$$
Recall that $ {\mathfrak {h}}_+\in\mathfrak{sl}(n,\mathbf C)$ is defined in \eqref{eqn:h+}, and that $ {\mathfrak {h}}_+^j$ is invariant by the 
holonomy of the cusp, for $j=1,\ldots,n-1$, by Lemma~\ref{lemma:invariants}.
In particular, the form $\vartheta\otimes {\mathfrak {h}}_+^ j$ is well defined, and it is closed iff $\vartheta$ is closed.

\begin{lemma}
\label{Lemma:forml2}
 The 1-form $\vartheta\otimes  {\mathfrak {h}}_+^j$ is $L^2$, for $j=1,\ldots,n-1$.
 \end{lemma}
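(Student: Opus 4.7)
My plan is to reduce the $L^2$-norm of $\vartheta\otimes\mathfrak{h}_+^j$ on the cusp to an elementary exponential integral, by computing separately how each of the three factors — the pulled-back 1-form $\vartheta$, the bundle element $\mathfrak{h}_+^j$, and the Riemannian volume form — scales with the cuspidal depth coordinate $t$.

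For the fiber factor, fix a basepoint $p$ on the horospherical torus bounding the cusp. Moving a distance $t$ perpendicularly into the cusp corresponds, under $\mathbf H^3\cong\operatorname{SL}(2,\mathbf C)/\operatorname{SU}(2)$, to the action of $\gamma_t=\operatorname{diag}(e^{t/2},e^{-t/2})$. By Lemma~\ref{lemma:invariants}, $\mathfrak{h}_+^j$ is invariant under the parabolic cusp holonomy, so its fiber norm at a point of $U$ depends only on $t$. Combining the defining rule for the Hermitian metric on $E_{\operatorname{Ad}\circ\rho_n}$ with \eqref{eqn:lambdah+}, I get
\[
|\mathfrak{h}_+^j|^2_{q_t}=|\gamma_t^{-1}\mathfrak{h}_+^j|^2_p=e^{-2jt}\,|\mathfrak{h}_+^j|^2_p.
\]

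For the covector factor, the warped metric $dt^2+e^{-2t}\,ds^2_{T^2}$ contracts tangent vectors along $T^2$ by $e^{-t}$, so the dual metric expands pulled-back covectors by $e^{t}$, giving $|\vartheta|^2_{q_t}=e^{2t}\,|\vartheta|^2_{T^2}$, which is bounded on the compact torus. The volume form is $d\operatorname{vol}_U=e^{-2t}\,dt\wedge d\operatorname{vol}_{T^2}$. Assembling these,
\[
|\vartheta\otimes\mathfrak{h}_+^j|^2\,d\operatorname{vol}_U=|\vartheta|^2_{T^2}\,|\mathfrak{h}_+^j|^2_p\,e^{-2jt}\,dt\wedge d\operatorname{vol}_{T^2},
\]
so the $L^2$-norm factors as the finite quantity $|\mathfrak{h}_+^j|^2_p\int_{T^2}|\vartheta|^2_{T^2}\,d\operatorname{vol}_{T^2}$ times $\int_0^\infty e^{-2jt}\,dt$, which converges for every $j\geq 1$.

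The only subtle step is disentangling the parabolic from the diagonal contributions: although the cusp holonomy itself is parabolic, it is the geodesic (diagonal) direction that drives the exponential decay of the fiber norm, precisely because $\mathfrak{h}_+^j$ is an eigenvector of the diagonal action by \eqref{eqn:lambdah+}. Lemma~\ref{lemma:invariants} guarantees that no dependence on the horospherical coordinate sneaks in, so the three exponentials $e^{2t}$, $e^{-2jt}$, and $e^{-2t}$ multiply to the clean, integrable $e^{-2jt}$.
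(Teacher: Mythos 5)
Your proposal is correct and follows essentially the same route as the paper: you compute the $e^{2t}$ growth of the pulled-back covector from the warped metric, the $e^{-2jt}$ decay of the fiber norm of $\mathfrak{h}_+^j$ from the diagonal scaling \eqref{eqn:lambdah+} together with the parabolic invariance of Lemma~\ref{lemma:invariants}, and the $e^{-2t}$ factor in the volume form, arriving at the same integrable $e^{-2jt}$. The paper phrases the fiber computation via the explicit isometry $\left(\begin{smallmatrix} e^{t/2} & z e^{-t/2} \\ 0 & e^{-t/2}\end{smallmatrix}\right)$ carrying the base point to a lift of $(p,t)$, which is exactly your decomposition into a unipotent factor (absorbed by invariance) and a diagonal factor.
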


\begin{proof}
Given $p\in T^2$  and $t\in [0,\infty)$, we first compute the norm of 
$\vartheta\otimes  {\mathfrak {h}}_+^j$ at $(p,t)\in T^2\times [0,\infty)=U$, and then we shall show that 
\begin{equation*}
\int_U \vert \vartheta\otimes    {\mathfrak {h}}_+^j\vert^2_{(p,t)} d\operatorname{vol}_U<\infty.
\end{equation*}
By compactness, there exists a constant $C>0$ such that $\vert \vartheta\vert_{(p,0)}\leq C$ for every point $p\in T^2$
when $t=0$. Since the metric is the warped product $ d t^2 + e^{-2 t} d s^2_{T^2}$:
$$
\vert \vartheta\vert_{(p,t)}\leq e^t C.
$$
On the other hand, if we work in the half space model for $\mathbf H^3$ and we assume that the horoball is centered at $\infty$,
the image of $\pi_1(U)$ is contained in $\pm\left(\begin{smallmatrix}
                                             1 & * \\ 0 & 1
                                            \end{smallmatrix}\right)
$.
Then
the isometry that brings a base point  to a lift of $(p,t)$ in $\mathbf H^3$ is 
$$
\begin{pmatrix}
 e^{t/2} & z e^{-t/2} \\
 0 & e^{-t/2}
\end{pmatrix},
$$
for some $z\in\mathbf C$.
By \eqref{eqn:lambdah+} and Lemma~\ref{lemma:invariants}, we have
\[
\begin{pmatrix}
 e^{t/2} & z e^{-t/2} \\
 0 & e^{-t/2}
\end{pmatrix}^{-1}\,  {\mathfrak {h}}_+^j=
\begin{pmatrix}
 e^{-t/2} & 0\\
 0 & e^{t/2}
\end{pmatrix} \,  {\mathfrak {h}}_+^j= e^{-j\, t}  {\mathfrak {h}}_+^j.
\]
By definition of the metric on the bundle $E_{\operatorname{Ad}\circ\rho_n}$:
$$
\vert  {\mathfrak {h}}_+^j\vert_{ (p,t)}=   e^{-j\, t} \vert    {\mathfrak {h}}_+^j   \vert_{ (p,0)}.
$$
Thus 
$$
\vert \vartheta\otimes        {\mathfrak {h}}_+^j     \vert _{(p,t)}\leq C' e^{(1-j)t },
$$
for some constant $C'>0$. In addition, using
$
d\operatorname{vol}_U=  e^{-2t} d\operatorname{vol}_{T^2}\wedge dt,
$
we compute:
$$
\int_U \vert \vartheta\otimes  {\mathfrak {h}}_+^j\vert^2_{(p,t)} d\operatorname{vol}_U\leq 
C''\int_0^{+\infty} e^{2(1-j)t-2 t} dt= C'' \int_0^{+\infty} e^{-2j\, t } dt<+\infty.
$$
\end{proof}

We next look for a basis for $H^1(U;E_{\operatorname{Ad}\circ\rho_n})$ (Lemma~\ref{Lemma:basis} below). 
Choose coordinates 
$(x,y)\in\mathbf R^2$ and view the torus as the quotient $\mathbf R^2/\mathbf Z^2$.
Let $\gamma_1$ and $\gamma_2$ be two generators of $\pi_1(T^2)$, and assume that they
act on the universal covering as:
$$
\gamma_1(x,y)=(x+1,y),\qquad \gamma_2(x,y)=(x,y+1), \qquad \forall x,y\in\mathbf R^2.
$$
Assume that their holonomy is defined by
$$
\gamma_1\to \pm \begin{pmatrix}
                              1 & 1 \\
			      0 & 1
                             \end{pmatrix} \quad\textrm{ and }\quad
\gamma_2\to \pm \begin{pmatrix}
                              1 & \tau \\
			      0 & 1
                             \end{pmatrix} ,
$$
for some $\tau\in\mathbf C\setminus\mathbf R$.

\begin{lemma}
\label{Lemma:basis}
For $i=1,\ldots,n-1$, the form  $$
\omega_i= d(x+\tau y)\otimes \begin{pmatrix}
                              1 & x+\tau y \\
			      0 & 1
                             \end{pmatrix}  {\mathfrak {h}}_-^i \in  \Omega^1(U;E_{\operatorname{Ad}\circ\rho_n})
$$
is closed. 
Moreover, for any $a,b\in\mathbf C$ such that $b\neq a\, \tau$,
$$
\{\omega_1,\ldots,\omega_{n-1},(a\,d x+b\, d y)\otimes  {\mathfrak {h}}_+,\ldots, (a\,d x+b\, d y)\otimes  {\mathfrak {h}}_+^{n-1}\}
$$ is a basis for $H^1(U;E_{\operatorname{Ad}\circ\rho_n})$.
\end{lemma}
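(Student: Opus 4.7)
The proof splits into three parts: (i) well-definedness and closedness of the forms, (ii) the dimension count $\dim H^1(U;E_{\operatorname{Ad}\circ\rho_n}) = 2(n-1)$, and (iii) linear independence of the $2(n-1)$ listed classes.

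For (i), the forms $\eta_j := (a\,dx + b\,dy)\otimes \mathfrak{h}_+^j$ descend to $U$ because $dx$ and $dy$ are $\pi_1(T^2)$-invariant and $\mathfrak{h}_+^j$ is fixed by the cusp holonomy (Lemma~\ref{lemma:invariants}); they are trivially closed. For $\omega_i$, set $F(x,y):=e^{(x+\tau y)\mathfrak{h}_+}$; the identity $F(\gamma_k(x,y)) = \varsigma_n(\rho_n(\gamma_k))\,F(x,y)$, together with the fact that the sign $(\pm 1)^{n-1}$ from \eqref{eqn:sym(m)} disappears under $\operatorname{Ad}$, shows that $\operatorname{Ad}(F)\mathfrak{h}_-^i$ is an equivariant section of $E_{\operatorname{Ad}\circ\rho_n}$. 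Closedness follows from $dg = d(x+\tau y)\,\operatorname{ad}(\mathfrak{h}_+)\,g$ with $g:=\operatorname{Ad}(F)\mathfrak{h}_-^i$, so $d\omega_i = -d(x+\tau y)\wedge dg = 0$.

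For (ii), $U$ deformation retracts onto $T^2$, and the trace pairing \eqref{eqn:bilinear} identifies $E_{\operatorname{Ad}\circ\rho_n}$ with its flat dual, so Poincar\'e duality on $T^2$ forces $\dim H^0 = \dim H^2$. By Lemma~\ref{lemma:invariants}, $H^0 = \langle\mathfrak{h}_+,\ldots,\mathfrak{h}_+^{n-1}\rangle$ has dimension $n-1$, and the vanishing $\chi(T^2;E_{\operatorname{Ad}\circ\rho_n})=0$ then forces $\dim H^1 = 2(n-1)$.

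The substantive step is (iii). Working on the universal cover with base point $p_0=(0,0)$, I choose primitives $\tilde\beta_i(x,y) = \int_0^{x+\tau y} e^{wN}\mathfrak{h}_-^i\,dw$ and $\tilde\alpha_j(x,y) = (ax+by)\mathfrak{h}_+^j$, where $N:=\operatorname{ad}(\mathfrak{h}_+)$. With $\psi(x) := (e^x-1)/x$, the associated crossed homomorphisms $u(\gamma) = \tilde\beta(\gamma p_0) - \operatorname{Ad}\rho_n(\gamma)\tilde\beta(p_0)$ are
\[
u_{\omega_i}\colon \gamma_1\mapsto \psi(N)\mathfrak{h}_-^i,\ \gamma_2\mapsto \tau\psi(\tau N)\mathfrak{h}_-^i,\qquad u_{\eta_j}\colon \gamma_1\mapsto a\mathfrak{h}_+^j,\ \gamma_2\mapsto b\mathfrak{h}_+^j.
\]
Assume a combination $\sum c_i u_{\omega_i} + \sum d_j u_{\eta_j}$ equals a coboundary $\gamma\mapsto(\operatorname{Ad}\rho_n(\gamma)-1)\xi$. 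Since $N$ is nilpotent, both $\psi(N)$ and $\psi(\tau N)$ are invertible and fix $\mathfrak{h}_+^j$; applying their inverses to the equations at $\gamma_1$ and $\gamma_2$ produces
\[
\sum c_i\mathfrak{h}_-^i + a\sum d_j\mathfrak{h}_+^j = N\xi,\qquad \sum c_i\mathfrak{h}_-^i + \tfrac{b}{\tau}\sum d_j\mathfrak{h}_+^j = N\xi.
\]
Subtracting and invoking $b\neq a\tau$ forces every $d_j=0$; then $\sum c_i\mathfrak{h}_-^i\in\operatorname{Im}(N)$, but in the Clebsch--Gordan decomposition $\mathfrak{sl}(n,\mathbf{C}) = \bigoplus_{k=1}^{n-1} V_k$ used in the proof of Lemma~\ref{lemma:invariants}, the vector $\mathfrak{h}_-^k$ is the lowest-weight vector of $V_k$ and hence lies outside $N(V_k)$, so every $c_i=0$. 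The main obstacle is this final weight-theoretic step, which separates the $\mathfrak{h}_-^i$-directions from the coboundary directions; the rest reduces to the identification of cocycles and the invertibility of $\psi(N)$, both routine once the setup is in place.
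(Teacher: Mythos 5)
Your proof is correct, but the decisive step is argued by a genuinely different route than the paper's. For linear independence the paper uses the cup product $H^1\times H^1\to H^2(U;\mathbf C)\cong\mathbf C$ induced by the trace form \eqref{eqn:bilinear}: it computes $\bigl((a\,dx+b\,dy)\otimes\mathfrak h_+^i\bigr)\wedge\omega_j=c_i\delta_i^j(a\tau-b)\,dx\wedge dy$ and $\omega_i\wedge\omega_j=0=\eta_i\wedge\eta_j$, so the $2(n-1)$ classes pair nondegenerately and are independent; the dimension $2(n-1)$ is simply quoted from \cite{MenalPorti}. You instead pass to group cohomology, compute the cocycles explicitly via the primitives $\int_0^{x+\tau y}e^{wN}\mathfrak h_-^i\,dw$, and rule out coboundary relations using the invertibility of $\psi(N)=(e^N-1)/N$ together with the fact that $\mathfrak h_-^k$ is a lowest-weight vector and hence not in $\operatorname{Im}(\operatorname{ad}\mathfrak h_+)$; you also rederive the dimension count from $\chi(T^2;E)=0$ and Poincar\'e duality with the self-dual coefficients (which is essentially the content of the cited computation). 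Both arguments are sound and of comparable length. The paper's pairing argument is slicker and reuses machinery ($c_i\neq 0$, invariance of the trace form) already set up in Section~\ref{sec:rel}; your cocycle computation is more hands-on but has the side benefit of producing the explicit values $d_{\omega_i}(\gamma)=\psi(N)\mathfrak h_-^i$, which is precisely the structural information about $X_i$ (lower-triangular corner equal to that of $\mathfrak h_-^i$) that the paper needs again in the proof of Proposition~\ref{prop:qi_multiple}. Two cosmetic points: the identity $F(\gamma_k(x,y))=\varsigma_n(\rho_n(\gamma_k))F(x,y)$ should read $F(\gamma_k(x,y))=\pm\rho_n(\gamma_k)F(x,y)$ (the sign being harmless under $\operatorname{Ad}$, as you note), and your closedness argument for $\omega_i$ is just a reformulation of the paper's observation that the coefficient section depends polynomially on $x+\tau y$.
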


\begin{proof} Notice that $\omega_i$ is defined on the universal covering $\widetilde U$ and, by construction, it is equivariant, hence defined on $U$.
 The form $\omega_i$ is closed because $\begin{pmatrix}
                              1 & x+\tau y \\
			      0 & 1
                             \end{pmatrix}  {\mathfrak {h}}_-^i$ 
has coordinates that are polynomial on the function $x+\tau y $, with respect to any $\mathbf C$-basis for $\mathfrak{sl}(n,\mathbf C)$.

Next we want to describe the basis for $H^1(U;E_{\operatorname{Ad}\circ\rho_n})$. 
Knowing that $\dim H^1(U;E_{\operatorname{Ad}\circ\rho_n})=2(n-1)$ \cite{MenalPorti}, we will show that the $2(n-1)$ differential
forms are linearly independent cohomology classes by using a bilinear pairing. 

This pairing is induced from the exterior product 
$\wedge: \Omega^i(U;E_{\operatorname{Ad}\circ\rho_n})\times  \Omega^j(U;E_{\operatorname{Ad}\circ\rho_n})\to \Omega^{i+j}(U;\mathbf C)$
$$
(\vartheta_1\otimes {\mathfrak v}_1)\wedge (\vartheta_2\otimes {\mathfrak v}_2)=  \operatorname{trace}({\mathfrak v}_1\, {\mathfrak v}_2)\, \vartheta_1\wedge\vartheta_2,
$$
where
$\vartheta_1\in \Omega^i(U)$, $\vartheta_2\in \Omega^j(U)$ are forms without coefficients (or coefficients in the trivial bundle), ${\mathfrak v}_1,{\mathfrak v}_2\in \mathfrak{sl}(n,\mathbf C)$. 
Recall that the pairing
$({\mathfrak v}_1,{\mathfrak v}_2)\mapsto \operatorname{trace}({\mathfrak v}_1\, {\mathfrak v}_2)$ was described in \eqref{eqn:bilinear} and that 
$\operatorname{trace}(  {\mathfrak {h}}_-^i\,  {\mathfrak {h}}_+^j)=\delta^j_i c_i$, where $\delta^i_i=1$, $\delta_i^j=0$ for $i\neq j$, and $c_i\neq 0$.

This exterior product induces a cup product in cohomology. Since the pairing and $ {\mathfrak {h}}_+^i$ are both invariant by the action of
$\left(\begin{smallmatrix}
        1 & * \\
	0 & 1
       \end{smallmatrix}
\right)
$,  we have:
\begin{multline*}
( (a\,dx+ b\,dy)\otimes  {\mathfrak {h}}_+^i )\wedge \omega_j = \\  \operatorname{trace}(  {\mathfrak {h}}_+^i \, \begin{pmatrix}
                              1 & x+\tau y \\
			      0 & 1
                             \end{pmatrix} 
 {\mathfrak {h}}_-^j) (a\,dx+ b\,dy)\wedge (dx+\tau dy)       = 
\\ 
\operatorname{trace}(    {\mathfrak {h}}_+^i\, 
  {\mathfrak {h}}_-^j ) (a\,dx+ b\,dy)\wedge (dx+\tau dy)
=  c_i \delta_i^j(a \tau -b)  d x\wedge dy.
\end{multline*}
In addition:
\begin{equation*}
  \omega_i\wedge \omega_j = \operatorname{trace}(   {\mathfrak {h}}_-^i\,   {\mathfrak {h}}_-^j ) (dx+\tau dy) \wedge (dx+\tau dy) =0, 
\end{equation*}
and
\begin{equation*}
   (a\,dx+ b\,dy)\otimes  {\mathfrak {h}}_+^i \wedge  (a\,dx+ b\,dy)\otimes  {\mathfrak {h}}_+^j=0.
\end{equation*}
Since $dx\wedge d y$ is the volume form of the torus, the lemma follows.
\end{proof}

\begin{proposition}
\label{prop:generatorimage}
 The image of the map $H^1(M^3;E_{\operatorname{Ad}\circ\rho_n})\to H^1(U;E_{\operatorname{Ad}\circ\rho_n})$ is the $(n-1)$-dimensional linear span of
$$
\{\omega_1+\sum_j a_{1,j }\, dx\otimes  {\mathfrak {h}}_+^j,\ \ldots\ , \omega_{n-1}+\sum_j a_{n-1,j }\, dx\otimes  {\mathfrak {h}}_+^j\},
$$ 
for some $a_{i,j}\in\mathbf C$.
\end{proposition}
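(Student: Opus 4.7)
\emph{Plan of proof.} By Theorem~\ref{thm:l2rigidity} (applied with $l=1$), $\dim H^1(M^3;E_{\operatorname{Ad}\circ\rho_n})=n-1$ and the restriction map $r\colon H^1(M^3;E_{\operatorname{Ad}\circ\rho_n})\to H^1(U;E_{\operatorname{Ad}\circ\rho_n})$ is injective, so its image $W$ has dimension exactly $n-1$. Applying Lemma~\ref{Lemma:basis} with $(a,b)=(1,0)$ (admissible because $\tau\notin\mathbf R$ implies $0\neq 1\cdot\tau$), we obtain the direct sum decomposition
\[
H^1(U;E_{\operatorname{Ad}\circ\rho_n}) = \langle[\omega_1],\ldots,[\omega_{n-1}]\rangle \oplus \langle[dx\otimes\mathfrak{h}_+^1],\ldots,[dx\otimes\mathfrak{h}_+^{n-1}]\rangle.
\]
The proposition is equivalent to the transversality statement $W\cap\langle[dx\otimes\mathfrak{h}_+^j]\rangle_{j=1}^{n-1}=0$: granted this, $W$ projects isomorphically onto $\langle[\omega_i]\rangle$, and the unique preimages in $W$ of $[\omega_1],\ldots,[\omega_{n-1}]$ are exactly classes of the desired form $[\omega_i+\sum_j a_{i,j}\,dx\otimes\mathfrak{h}_+^j]$.

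The plan for the transversality is to invoke the $L^2$-obstruction of Theorem~\ref{thm:l2rigidity}. Suppose $\xi=\sum_j d_j[dx\otimes\mathfrak{h}_+^j]$ lies in $W$; by Lemma~\ref{Lemma:forml2}, $\eta:=\sum_j d_j\,dx\otimes\mathfrak{h}_+^j$ is an $L^2$ representative of $\xi$ on $U$. Writing $\xi=r(\tilde\xi)$ and choosing a smooth closed representative $\omega$ of $\tilde\xi$ on $M^3$, we have $\omega|_U-\eta=d\beta$ for some section $\beta\in\Omega^0(U;E_{\operatorname{Ad}\circ\rho_n})$. Next, choose a smooth cutoff $\phi\colon M^3\to[0,1]$ supported in the interior of the cusp $U$ and identically $1$ on a deep sub-cusp (for instance $\phi\equiv 0$ on $M^3\setminus(T^2\times[1,\infty))$ and $\phi\equiv 1$ on $T^2\times[2,\infty)$). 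Extending $\phi\beta$ by zero outside $U$, the form
\[
\omega':=\omega-d(\phi\beta)
\]
is smooth, closed, and cohomologous to $\omega$ on $M^3$. On the deep sub-cusp $\omega'=\omega-d\beta=\eta$ is $L^2$; on the compact complement $\omega'$ is bounded, hence $L^2$. Thus $\omega'$ is a global $L^2$ representative of $\tilde\xi$ on $M^3$, which by Theorem~\ref{thm:l2rigidity} forces $\tilde\xi=0$ and therefore $\xi=0$.

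The hard part of the argument is the cutoff construction: the section $\beta$ need not be $L^2$, and naively replacing $\omega$ by $\omega-d\beta$ is impossible because $\beta$ is only defined on $U$. The cutoff $\phi\beta$ resolves both issues because $d\phi$ is supported on the compact transition annulus $T^2\times[1,2]$, so the additional term $d\phi\wedge\beta$ introduced there is automatically $L^2$, while on the deep sub-cusp the correction is exactly what is needed to expose the $L^2$ representative $\eta$.
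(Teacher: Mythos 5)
Your proof is correct and follows essentially the same route as the paper: a dimension count using Theorem~\ref{thm:l2rigidity} and Lemma~\ref{Lemma:basis} reduces everything to showing the image meets $\langle [dx\otimes\mathfrak{h}_+^j]\rangle$ trivially, which is ruled out by the $L^2$-obstruction. The only difference is that you carefully justify, via the cutoff construction, that an $L^2$ representative on the cusp can be promoted to a global $L^2$ representative on $M^3$ — a step the paper's one-line proof leaves implicit — and that part of your argument is also correct.
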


\begin{proof}
By contradiction: if the lemma was not true, then there would be a nontrivial 
element
$
\sum_j  a_{j }\, dx\otimes  {\mathfrak {h}}_+^j
$
 in the image, because it is $n-1$ dimensional. 
 But this form is $L^2$ (Lemma~\ref{Lemma:forml2}) contradicting 
Theorem~\ref{thm:l2rigidity}.
\end{proof}

\section{Derivating the elementary symmetric polynomials}
\label{section:derivate}

We want to compute the derivatives of the elementary symmetric polynomials of a peripheral element $\gamma$ with respect to the infinitesimal deformations 
of Proposition~\ref{prop:generatorimage}.

We first describe the map between closed 1-forms in $\Omega^1(M^3;E_{\operatorname{Ad}\circ\rho_n})$ and group cocycles in
\begin{multline*}
Z^1(M^3;\mathfrak{sl}(n,\mathbf C)_{\operatorname{Ad}\circ\rho_n})=
\{d:\pi_1(M^3)\to \mathfrak{sl}(n,\mathbf C) \mid \\ d(\gamma_1\gamma_2)=d(\gamma_1)+ \operatorname{Ad}_{\rho_n(\gamma)}( d(\gamma_2)),\ \forall\gamma_1,\gamma_2\in\pi_1(M^3)\} 
\end{multline*}
 that induces the isomorphism between de Rham 
and group cohomology.
For this purpose we fix a point $p\in M^3$, that will be the base point for $\pi_1(M^3,p)$. 
Let $\vartheta\in \Omega^1(M^3;E_{\operatorname{Ad}\circ\rho_n})$ be a closed 1-form. In particular it represents an element in de Rham cohomology
 $H^1(M^3;E_{\operatorname{Ad}\circ\rho_n})$. This form is mapped to the cocycle
\begin{equation}
\label{eqn:integration}
 \begin{array}{rcl}
d_\vartheta:  \pi_1(M^3,p) & \to & \mathfrak{sl}(n,\mathbf C) \\
   {[} \gamma {]}  & \mapsto & \int_{\gamma} \vartheta,
\end{array}
\end{equation}
where $\gamma$ is a loop based at $p$ representing ${[}\gamma{]}\in\pi_1(M^3,p)$.
See \cite[\S 6.3]{Weiss} for details.
The map $d_\vartheta:\pi_1(M^3,p)\to  \mathfrak{sl}(n,\mathbf C) $ is a cocycle, and 
its group cohomology class only depends on the de Rham cohomology class of the form $\vartheta$.

We next  describe  \emph{Weil's construction} \cite{Weil,LubotzkiMagid}, that maps a group cocycle  in $Z^1(\pi_1(M^3);\mathfrak{sl}(n,\mathbf C)_{\operatorname{Ad}\circ\rho_n})$ to an infinitesimal deformation 
of $\rho_n$.
This construction induces an isomorphism between $H^1(\pi_1(M^3);  \mathfrak{sl}(n,\mathbf C)_{\operatorname{Ad}\circ\rho_n} )$ and the Zariski tangent space of 
$X(\pi_1(M^3),\operatorname{SL}(n,\mathbf C))$ at $\chi_n$.
Weil's construction maps the cocycle $d\in Z^1(\pi_1(M^3);\mathfrak{sl}(n,\mathbf C)_{\operatorname{Ad}\circ\rho_n})$ to the first order deformation
 of the representation $\rho_{n}$
\begin{equation}
\label{eqn:weil}
\rho_{n,\varepsilon}(\gamma)= (\operatorname{Id}+\varepsilon\, d(\gamma))\rho_n(\gamma),\qquad \forall \gamma\in\pi_1(M^3,p).
\end{equation}
Since $d(\gamma_1\gamma_2)=d(\gamma_1)+\operatorname{Ad}_{\rho_n(\gamma_1)} d(\gamma_2)$, $\rho_{n,\varepsilon}$ is a first order deformation, namely:
$$
\rho_{n,\varepsilon}(\gamma_1\gamma_2)=\rho_{n,\varepsilon}(\gamma_1)\rho_{n,\varepsilon}(\gamma_2)+ O(\varepsilon^2), \qquad \forall \gamma_1,\gamma_2\in\pi_1(M^3,p).
$$

For an  elementary symmetric polynomial $\sigma_i$, an element $\gamma\in\pi_1(M^3,p)$ and $\vartheta\in \Omega^1(M^3;E_{\operatorname{Ad}\circ\rho_n}) $,
the derivative of $\sigma_i^{\gamma}$ with respect to the direction of the cohomology class of $\vartheta$ is
\begin{equation}
\label{eqn:derivative}
\lim_{\varepsilon\to 0}\frac{ \sigma_i(  (\operatorname{Id}+\varepsilon\, d_{\vartheta}(\gamma))\rho_n(\gamma)   ) -  \sigma_i(\rho_n(\gamma)   ) }{\varepsilon},
\end{equation}
where $d_{\vartheta}$ is as in \eqref{eqn:integration}.

\medskip

\emph{Fix $\gamma\in\pi_1(U)$ a nontrivial peripheral element}. We may assume that the  lift of its holonomy is 
$$
\pm \begin{pmatrix}
     1 & 1 \\
      0 & 1
    \end{pmatrix}.
$$

\begin{rem}
\label{rem:only+}
To simplify, we shall only deal with the case when the lift is $+
\begin{pmatrix}
     1 & 1 \\
      0 & 1
    \end{pmatrix}
$. When it is $-
\begin{pmatrix}
     1 & 1 \\
      0 & 1
    \end{pmatrix}
$, the argument is completely similar. 
\end{rem}

We want to compute the derivatives of $\sigma_i^{\gamma}$ with respect to the forms of Lemma~\ref{Lemma:basis}.

\begin{lemma}
\label{lemma:zeroderivative}
For a peripheral element $\gamma\in\pi_1(U)$, 
the derivative of  $\sigma_i^{\gamma}$ with respect to $dx\otimes   {\mathfrak {h}}_+^j$ is zero.
\end{lemma}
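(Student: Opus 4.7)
The plan is to translate the question into a matrix computation via Weil's formula \eqref{eqn:weil}, and to exploit the fact that both the perturbation and $\rho_n(\gamma)$ are unipotent upper-triangular, so their product cannot move the eigenvalues away from $1$.

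First I would compute the cocycle $d_\vartheta$ associated with $\vartheta=dx\otimes \mathfrak{h}_+^j$ using the integration formula \eqref{eqn:integration}. Choose a basepoint in the cusp $U$; in the $(x,y)$-coordinates of Lemma~\ref{Lemma:basis}, the normalization that the lift of the holonomy of $\gamma$ be $\begin{pmatrix}1 & 1 \\ 0 & 1\end{pmatrix}$ forces $\gamma=\gamma_1$, since $\tau\notin\mathbf{R}$. Because $\mathfrak{h}_+^j$ is fixed by the holonomy of the cusp (Lemma~\ref{lemma:invariants}), the corresponding $\mathfrak{sl}(n,\mathbf{C})$-valued section is parallel for the flat connection on $E_{\operatorname{Ad}\circ\rho_n}$, so the integration along a lift of $\gamma$ to $\widetilde U$ reduces to the ordinary Euclidean integral $\int_0^1 dx=1$. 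Therefore $d_\vartheta(\gamma)=\mathfrak{h}_+^j$. The fact that $\vartheta$ is a priori only defined on $U$ is not a concern: by Proposition~\ref{prop:generatorimage} its cohomology class extends to $M^3$, and since $\gamma$ lies in $U$, only the restriction $\vartheta|_U$ enters the integral.

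Next I would insert this cocycle into \eqref{eqn:weil}. Using \eqref{eqn:sym(m)} one has $\rho_n(\gamma)=e^{\mathfrak{h}_+}$, so
\[
\rho_{n,\varepsilon}(\gamma)=(\operatorname{Id}+\varepsilon\,\mathfrak{h}_+^j)\,e^{\mathfrak{h}_+}.
\]
Inspecting \eqref{eqn:h+}, $\mathfrak{h}_+$ is strictly upper-triangular, hence so is every $\mathfrak{h}_+^j$ for $j\geq 1$; thus both $\operatorname{Id}+\varepsilon\,\mathfrak{h}_+^j$ and $e^{\mathfrak{h}_+}$ are upper-triangular with $1$'s on the diagonal, and their product shares this property for every $\varepsilon$. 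Consequently the characteristic polynomial of $\rho_{n,\varepsilon}(\gamma)$ is identically $(\lambda-1)^n$ and $\sigma_i(\rho_{n,\varepsilon}(\gamma))=\binom{n}{i}$ does not depend on $\varepsilon$. The limit in \eqref{eqn:derivative} therefore vanishes.

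I do not foresee a real obstacle. The only step that deserves a line of justification is the identification of the cocycle $d_\vartheta(\gamma)$ with $\mathfrak{h}_+^j$, and that is immediate from the invariance of $\mathfrak{h}_+^j$ under the cuspidal holonomy. Everything else is the elementary algebraic fact that a product of unipotent upper-triangular matrices remains unipotent upper-triangular.
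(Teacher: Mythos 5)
Your proof is correct and follows essentially the same route as the paper: identify $d_\vartheta(\gamma)=\mathfrak{h}_+^j$ via the integration formula, observe that $(\operatorname{Id}+\varepsilon\,\mathfrak{h}_+^j)\rho_n(\gamma)$ is unipotent upper-triangular, and conclude that $\sigma_i$ of it is the constant $\binom{n}{i}$. The extra details you supply (parallelism of the section, $\gamma=\gamma_1$, the explicit value $\binom{n}{i}$) are all accurate and merely flesh out the paper's three-line argument.
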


\begin{proof} 
When $\vartheta=dx\otimes   {\mathfrak {h}}_+^j$, $d_{\vartheta}(\gamma)= {\mathfrak{h}}_+^j$, by \eqref{eqn:integration}. Therefore
$\rho_{n,\varepsilon}(\gamma)= (\operatorname{Id}+ \varepsilon  \, {\mathfrak {h}}_+^j)\rho_n(\gamma) $ is upper triangular with $1$ on the diagonal.
In particular $\sigma_i(\rho_{n,\varepsilon}(\gamma))$ is independent of $\varepsilon$, and we get zero when computing the limit \eqref{eqn:derivative}.
\end{proof}

Now, to analyze the derivative of the $\sigma_j^{\gamma}$ with respect to $\omega_i$, the differential forms of Lemma~\ref{Lemma:basis}, we shall look
at characteristic polynomials. Let $P_{i,\varepsilon}^\gamma(\lambda)$ denote the characteristic polynomial of 
$
(\operatorname{Id}+\varepsilon\, d_{\omega_i}(\gamma))\rho_n(\gamma)
$:
$$
P_{i,\varepsilon}^\gamma(\lambda)=\det \big(\lambda \operatorname{Id} - [\operatorname{Id}+\varepsilon\, d_{\omega_i}(\gamma)]\rho_n(\gamma) \big).
$$
We write
$$
P_{i,\varepsilon}^\gamma(\lambda)= (\lambda-1)^n+\varepsilon\, Q_i^\gamma(\lambda) + O(\varepsilon^2),
$$
for some polynomial $Q_i^\gamma(\lambda)\in\mathbf C[\lambda]$. The role of the $Q_i^\gamma(\lambda)$ comes from the following lemma,
whose proof is a consequence of  \eqref{eqn:derivative}.

\begin{lemma}
\label{lemma:coefficientQi} For $i,j=1,\ldots, n-1$,
the $\lambda^{n-j}$-coefficient of $Q_i^\gamma(\lambda)$ is the derivative of $(-1)^j\sigma_j^\gamma$ with respect to
$\omega_i$.  
\end{lemma}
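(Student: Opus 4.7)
The plan is to simply match coefficients of $\lambda^{n-j}$ on the two sides of the defining expansion
\[
P_{i,\varepsilon}^\gamma(\lambda)=(\lambda-1)^n+\varepsilon\, Q_i^\gamma(\lambda)+O(\varepsilon^2),
\]
using the fact (from the very definition of the characteristic polynomial in the introduction) that the elementary symmetric polynomials are, up to signs, the coefficients of $P_A(\lambda)$. Concretely, for any $A\in\operatorname{SL}(n,\mathbf C)$,
\[
P_A(\lambda)=\sum_{k=0}^{n}(-1)^k\sigma_k(A)\,\lambda^{n-k},
\]
so the $\lambda^{n-j}$-coefficient of $P_{i,\varepsilon}^\gamma(\lambda)$ is exactly
\[
(-1)^j\,\sigma_j\!\left((\operatorname{Id}+\varepsilon\, d_{\omega_i}(\gamma))\rho_n(\gamma)\right).
\]

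Next I would check that the $\varepsilon^0$ term matches. By Remark~\ref{rem:only+} we may assume the lift of the peripheral holonomy is $+\left(\begin{smallmatrix}1&1\\0&1\end{smallmatrix}\right)$, so by \eqref{eqn:sym(m)} the matrix $\rho_n(\gamma)=e^{\mathfrak{h}_+}$ is unipotent with characteristic polynomial $(\lambda-1)^n$. Expanding $(\lambda-1)^n=\sum_k\binom{n}{k}(-1)^k\lambda^{n-k}$ gives $\sigma_j(\rho_n(\gamma))=\binom{n}{j}$, which agrees with extracting $\varepsilon=0$ from the displayed identity above.

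Finally, differentiating both sides of
\[
(-1)^j\,\sigma_j\!\left((\operatorname{Id}+\varepsilon\, d_{\omega_i}(\gamma))\rho_n(\gamma)\right)
=(-1)^j\binom{n}{j}+\varepsilon\,[Q_i^\gamma(\lambda)]_{\lambda^{n-j}}+O(\varepsilon^2)
\]
with respect to $\varepsilon$ at $\varepsilon=0$ gives
\[
(-1)^j\lim_{\varepsilon\to 0}\frac{\sigma_j((\operatorname{Id}+\varepsilon\, d_{\omega_i}(\gamma))\rho_n(\gamma))-\sigma_j(\rho_n(\gamma))}{\varepsilon}
=[Q_i^\gamma(\lambda)]_{\lambda^{n-j}}.
\]
By formula~\eqref{eqn:derivative}, the left-hand side is precisely $(-1)^j$ times the derivative of $\sigma_j^\gamma$ along the cohomology class of $\omega_i$, which proves the lemma.

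There is essentially no obstacle: the content of the lemma is just a translation between two equivalent encodings (coefficients of the characteristic polynomial versus elementary symmetric polynomials of the eigenvalues), together with the first-order Taylor expansion in $\varepsilon$. The only point requiring a moment of care is the sign ambiguity in the peripheral lift, and that is already absorbed into Remark~\ref{rem:only+}.
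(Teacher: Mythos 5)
Your proof is correct and is exactly the argument the paper intends: the paper dismisses this lemma as "a consequence of \eqref{eqn:derivative}", and what you have written is that one-line observation spelled out (signed coefficients of the characteristic polynomial equal the $\sigma_j$, then take $\partial/\partial\varepsilon$ at $\varepsilon=0$). No issues.
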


To compute $Q_i^\gamma(\lambda)$ we set the following notation:
\begin{equation*}
A = \lambda \operatorname{Id} - \rho_n(\gamma) \quad\textrm{ and }\quad X_i= d_{\omega_i}(\gamma) \rho_n(\gamma),
\end{equation*}
so that 
$$
P_{i,\varepsilon}^\gamma(\lambda)=\det(A+\varepsilon\, X_i)=\det(A)\det(\operatorname{Id} +\varepsilon A^{-1} X_i).
$$
As the derivative of the determinant at the identity is the trace:
\begin{equation}
\label{eqn:Qi}
Q_i^\gamma(\lambda)=\det(A)\operatorname{trace}(A^{-1} X_i)= (\lambda-1)^n  \operatorname{trace}(A^{-1} X_i).
 \end{equation}
With this formula we may prove:

\begin{proposition} \label{prop:qi_multiple} 
	For $\gamma\in \pi_1(U)$ nontrivial and for $i=1,\ldots,n-1$ the following assertions hold:
	\begin{enumerate}
		\item $Q_i^\gamma(0) = 0$.
		\item $Q_i^\gamma(\lambda)$ is a multiple of $(\lambda-1)^{n-i-1}$ but not of $(\lambda-1)^{n-i}$.
	\end{enumerate}
\end{proposition}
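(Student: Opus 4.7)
The plan is to extract $Q_i^\gamma(\lambda)$ as a Laurent expansion in $\mu := \lambda-1$ and then exploit the weight grading of $\operatorname{End}(\mathbf C^n)$ under $\operatorname{ad}_h$, where $h = \varsigma_n(\Lie{e})$. Following Remark~\ref{rem:only+} I set $N = {\mathfrak h}_+$, so that $\rho_n(\gamma) = e^N$ and $A = \mu\operatorname{Id} - M$ with $M := e^N - \operatorname{Id}$ nilpotent of order $n$. Expanding the geometric series yields
\[
A^{-1} = \sum_{k=0}^{n-1} M^k\, \mu^{-(k+1)}.
\]
Plugging into \eqref{eqn:Qi} and writing $Y := d_{\omega_i}(\gamma)$, I obtain $Q_i^\gamma(\lambda) = \sum_{k=0}^{n-1} a_k\,\mu^{n-1-k}$ with $a_k = \operatorname{trace}(M^k\, Y\, e^N)$. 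So part~(2) is equivalent to the two statements $a_k = 0$ for $k\geq i+1$ and $a_i\neq 0$.

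Part~(1) is immediate from \eqref{eqn:Qi}: at $\lambda = 0$ one has $A = -\rho_n(\gamma)$ and $\det A = (-1)^n$, so cyclicity of the trace gives $Q_i^\gamma(0) = (-1)^{n+1}\operatorname{trace}(Y)$, which vanishes because $Y\in\mathfrak{sl}(n,\mathbf C)$ is traceless.

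For part~(2) I first compute $Y$ explicitly. Integrating $\omega_i$ along a lift of $\gamma$ gives
\[
Y = \int_0^1 \operatorname{Ad}_{e^{xN}}({\mathfrak h}_-^i)\,dx = \sum_{j=0}^{2i}\frac{1}{(j+1)!}(\operatorname{ad}_N)^j({\mathfrak h}_-^i),
\]
which lies in the $\operatorname{Sym}^{2i}$ summand of $\mathfrak{sl}(n,\mathbf C)$ generated by the lowest weight vector ${\mathfrak h}_-^i$. Under $\operatorname{ad}_h$, the vector $N$ has weight $+2$; consequently $M = \sum_{j\geq 1} N^j/j!$ has components only of weights $\{+2,+4,\ldots,+2(n-1)\}$, $M^k$ has minimum weight $+2k$, $e^N = \operatorname{Id} + M$ has minimum weight $0$, and $Y$ has weights in $\{-2i,\ldots,+2i\}$. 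Because any weight-$w$ matrix with $w\neq 0$ is off-diagonal and hence traceless, $a_k$ equals the trace of the weight-$0$ part of $M^kYe^N$, which is the sum over triples $(w_1,w_2,w_3)$ of component weights with $w_1+w_2+w_3 = 0$. The constraints $w_1\geq 2k$, $w_2\geq -2i$, $w_3\geq 0$ are incompatible with the sum being zero when $k\geq i+1$, so $a_k = 0$. For $k = i$ the unique admissible triple is $(2i,-2i,0)$: the weight-$2i$ part of $M^i$ is $N^i = {\mathfrak h}_+^i$ (selecting $j_1=\dots=j_i=1$ in each of the $i$ factors of $M$), the weight-$(-2i)$ part of $Y$ is ${\mathfrak h}_-^i$ (the $j=0$ term above), and the weight-$0$ part of $e^N$ is $\operatorname{Id}$. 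Hence $a_i = \operatorname{trace}({\mathfrak h}_+^i\,{\mathfrak h}_-^i) = c_i\neq 0$ by the pairing \eqref{eqn:bilinear}.

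The main obstacle is organizing the weight bookkeeping cleanly so that the extremal weight components of $M^i$ and $Y$ reduce to exactly ${\mathfrak h}_+^i$ and ${\mathfrak h}_-^i$ with no stray combinatorial factors; once this is done, the nonvanishing of $c_i$ feeds directly into the nonvanishing of $a_i$.
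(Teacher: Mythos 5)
Your argument is correct and is essentially the paper's own proof: the same geometric-series expansion of $A^{-1}$ in powers of $(\lambda-1)^{-1}$ reduces both parts to the traces $\operatorname{trace}(M^k\,Y e^N)$, and your $\operatorname{ad}_h$-weight bookkeeping is just a coordinate-free repackaging of the paper's explicit observation about which diagonals of $N^k$ and $X_i$ can be nonzero (the weight-$w$ subspace being the $(w/2)$-th diagonal, so the extremal components are exactly $\mathfrak{h}_+^k$ and $\mathfrak{h}_-^i$ and pair to $c_i\neq 0$ precisely when $k=i$). The only cosmetic slip is labelling the irreducible summand generated by $\mathfrak{h}_-^i$ as ``$\operatorname{Sym}^{2i}$'', which in the paper's convention is $\operatorname{Sym}^{2i+1}(\mathbf{C}^2)$; this does not affect the argument.
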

\begin{proof}
	At $\lambda = 0$ we have, $P_{i,\varepsilon}^\gamma(0) = (-1)^n + O(\varepsilon^2)$.
	Indeed the trace of the matrix $d_{\omega_i}(\gamma)$ is zero, and hence
	\[
	\det(\operatorname{Id}+\varepsilon\, d_{\omega_i}(\gamma) ) = 1 + O(\varepsilon^2).
	\]
	This proves the first assertion. In order to prove the second assertion we use  \eqref{eqn:Qi}. 
	To compute $A^{-1}$, as we assume that the holonomy of $\gamma$ is 
	$\left(\begin{smallmatrix} 1 & 1 \\ 0& 1\end{smallmatrix}\right)$, using \eqref{eqn:sym(m)}
	we write
	\begin{equation}
	\label{eqn:N}
	 N= \rho_n(\gamma)-\operatorname{Id}=  \sum_{j=1}^{n-1} \frac{1}{j!} h_+^j, 
	\end{equation}
	so that 
        \[
	   A= (\lambda-1)\operatorname{Id}- N=(\lambda - 1)\left(\operatorname{Id} -(\lambda -1) ^{-1} N\right).
        \]
	As $N^n=0$, the inverse of $A$ is
	\[
	  A^{-1}= (\lambda -1)^{-1}\sum_{k=0}^{n-1}(\lambda-1)^{-k} N^k,
	\]
	and  \eqref{eqn:Qi} becomes:
	\begin{equation}
	 \label{eqn:qin}
	  Q_i^\gamma(\lambda) =  \sum_{k= 0}^{n-1} (\lambda -1)^{n -k-1} \operatorname{trace}(N^k X_i).
	\end{equation}
	On the other hand, by construction of $\mathfrak{h}_-$ \eqref{eqn:h-},
	$$
	\mathfrak{h}_-^{i} =
	\begin{pmatrix}
		0 & 0 & \cdots &  0 &  \cdots & 0\\
		\vdots &  &  &   & & \vdots \\
		a_{i+1,1} & 0 &    &   &  & 0\\
		0 & a_{i+2,2} &    &  &  & 0\\
		\vdots &  & \ddots      &  & &  \vdots \\
		0 & 0 & \cdots  & a_{n,n-i} & \cdots & 0 
	\end{pmatrix},
	$$
	with 
	$
	    a_{i+1,1},a_{i+2,2},\ldots,  a_{n,n-i} >0.
	$
	In addition, since $X^i$ is obtained from $\mathfrak{h}_-^{i} $ by multiplication by upper triangular matrices that have $1$ in the diagonal
	(see  \eqref{eqn:integration} and Lemma~\ref{Lemma:basis}), $X_i$ has the 
	same bottom left $(n-i)$-triangular corner as $\mathfrak{h}_-^{i}$:
	\begin{equation}
	\label{eqn:shapem'}
	  X_i=
	\begin{pmatrix}
		* & * & \cdots & *   & \cdots & *\\
		\vdots &  &  &  & &  \vdots \\
		a_{i+1,1} & * &   &  & &  *\\
		0 & a_{i+2,2} &   & & &   *\\
		\vdots &  & \ddots    & & &  \vdots \\
		0 & 0 & \cdots  & a_{n,n-i} & \cdots & *
	\end{pmatrix},
	\end{equation}
	with $a_{i+1,1},a_{i+2,2},\ldots,  a_{n,n-i} >0$.
	In addition, by \eqref{eqn:N}
\begin{equation}
	\label{eqn:shapeM}
	  N^k=
	\begin{pmatrix}
		 0      & \cdots & b_{1,k+1} &      *    & \cdots  &    *      \\
		 0      &        &    0      & b_{2,k+1} &         &    *      \\
		 \vdots &        &           &           & \ddots  & \vdots    \\
		 0      &        &           &           &         & b_{n-k,n} \\
		\vdots  &        &           &           &         & \vdots    \\
		 0      & \cdots &    0      &      0    & \cdots  &   0
	\end{pmatrix},
	\end{equation}
        with $b_{1,k+1}=a_{n-k,n}>0, \ldots,  b_{n-k,n}=a_{k+1,1} >0$.
        Using the description of $N^k$ and $X_i$:
	\[
	     \operatorname{trace}(N^k X_i)=
	      \left\{
			\begin{array}{ll}
			 0, & \textrm{for } k \geq i +1;  \\
			\operatorname{trace}(h_+^i h_-^i)> 0, & \textrm{for } k =i. 
			\end{array}
	      \right.
	\]
The second assertion follows from this computation and \eqref{eqn:qin}.
\end{proof}

\begin{corollary} \label{coro:basis}
	The polynomials $Q_1^\gamma(\lambda),\ldots ,Q_{n-1}^\gamma(\lambda)$ 
	form a $\mathbf C$-basis for the subspace of polynomials in $\mathbf C[\lambda]$ of degree $\leq n-1$ 
	that are multiples of $\lambda$.
\end{corollary}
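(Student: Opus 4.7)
The target subspace $\{\lambda\, p(\lambda) : \deg p \leq n-2\} \subset \mathbf{C}[\lambda]$ has dimension exactly $n-1$, and we are handed $n-1$ polynomials $Q_1^\gamma,\ldots,Q_{n-1}^\gamma$. So the plan reduces to checking two things: that each $Q_i^\gamma$ actually lies in this subspace, and that the $Q_i^\gamma$ are linearly independent. Once both are shown, the equality of dimensions closes the argument.

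First I would verify containment. Vanishing at $\lambda = 0$ is exactly assertion (1) of Proposition~\ref{prop:qi_multiple}, so each $Q_i^\gamma$ is a multiple of $\lambda$. For the degree bound, note that $P_{i,\varepsilon}^\gamma(\lambda)$ is the characteristic polynomial of an $n\times n$ matrix, hence monic of degree $n$ in $\lambda$ for every $\varepsilon$; comparing the coefficient of $\lambda^n$ on both sides of $P_{i,\varepsilon}^\gamma(\lambda) = (\lambda-1)^n + \varepsilon\, Q_i^\gamma(\lambda) + O(\varepsilon^2)$ forces the $\lambda^n$-coefficient of $Q_i^\gamma$ to vanish. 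Thus $\deg Q_i^\gamma \leq n-1$, as required.

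Next I would establish linear independence using a triangularity argument based on the order of vanishing at $\lambda = 1$. By assertion (2) of Proposition~\ref{prop:qi_multiple}, $Q_i^\gamma$ vanishes to order exactly $n-i-1$ at $\lambda = 1$. As $i$ ranges over $1,\ldots,n-1$, these orders take the distinct values $n-2, n-3,\ldots,0$. Suppose $\sum_{i=1}^{n-1} c_i\, Q_i^\gamma(\lambda) = 0$ and let $j$ be the largest index with $c_j \neq 0$. For all $i < j$ with $c_i \neq 0$, the term $c_i\, Q_i^\gamma(\lambda)$ vanishes at $\lambda = 1$ to order strictly greater than $n-j-1$, whereas $c_j\, Q_j^\gamma(\lambda)$ vanishes to order exactly $n-j-1$. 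Hence the sum vanishes to order exactly $n-j-1$ at $\lambda=1$, contradicting that it is the zero polynomial.

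There is essentially no obstacle here: the entire content has already been extracted in Proposition~\ref{prop:qi_multiple}, and the corollary is a clean book-keeping deduction. The only subtle point worth flagging is the degree bound $\deg Q_i^\gamma \leq n-1$, which is not literally stated in the proposition but follows immediately from monicity of the characteristic polynomial.
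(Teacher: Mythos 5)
Your proof is correct and follows essentially the same route as the paper: the linear independence via the strictly decreasing orders of vanishing at $\lambda=1$ is exactly the paper's ``reduce modulo $(\lambda-1)^k$'' triangularity argument, and the dimension count then finishes. Your explicit check of the degree bound $\deg Q_i^\gamma \leq n-1$ (from monicity of the characteristic polynomial) is a point the paper leaves implicit, and it is a reasonable detail to include.
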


\begin{proof} 
	By the first assertion of Proposition \ref{prop:qi_multiple}, it is enough to prove that the polynomials $Q_i^\gamma(\lambda)$ are linearly 
	independent. Assume that for some $\alpha_1,\ldots,\alpha_{n-1}\in\mathbf C$
	\[
	\sum_{i = 1}^{n-1} \alpha_i Q_i^\gamma(\lambda) = 0.
	\]
	The second assertion of Proposition \ref{prop:qi_multiple} implies that
	reduction modulo $\lambda -1$ yields $\alpha_{n-1} = 0$, reduction modulo $(\lambda -1)^2$ yields $\alpha_{n-2} = 0$,
	and so on. Thus the above linear combination must be trivial, as we wanted to prove.
\end{proof}

\begin{proof}[Proof of Theorem~\ref{thm:main}]
 By Corollary~\ref{coro:basis} and Lemma~\ref{lemma:coefficientQi}, 
the $(n-1)\times (n-1)$ matrix whose $(i,j)$-entry is the derivative of $(-1)^j\sigma_j^\gamma$ with respect to $\omega_i$ has nonzero determinant. 
Combining this with Lemma~\ref{lemma:zeroderivative} and Proposition~\ref{prop:generatorimage}, it follows that the 
differential forms $d \, \sigma^{\gamma}_1,\ldots,d \, \sigma^{\gamma}_{n-1}$ form a basis for the cotangent space
of $X(M^3,SL(n,\mathbf C))$ at $\chi_n$. 
Hence Theorem~\ref{thm:main} follows from the holomorphic implicit function theorem.
\end{proof}

\

\begin{footnotesize}
\bibliographystyle{plain}



\textsc{Departament de Matem\`atiques, Universitat Aut\`onoma de Barcelona.}

\textsc{08193 Bellaterra, Spain}

{pmenal@mat.uab.cat, porti@mat.uab.cat}

\end{footnotesize}

 \end{document}